\newtheorem{teo}{Theorem}[section]
\newtheorem{pro}[teo]{Proposition}
\newtheorem{coro}[teo]{Corollary}
\newtheorem{lem}[teo]{Lemma}
\theoremstyle{definition}
\newtheorem{defi}[teo]{Definition}
\newtheorem{exam}[teo]{Example}
\newtheorem{rem}[teo]{Remark}
\newtheorem{algo}[teo]{Algorithm}
\newcommand{\V}{\mathbf V}
\newcommand{\R}{\mathbb R}
\newcommand{\N}{\mathbb N}
\newcommand{\Z}{\mathbb Z}
\newcommand{\C}{\mathbb C}
\newcommand{\Po}{\C[x_1,\ldots,x_n]}
\newcommand{\A}{\mathscr A}
\newcommand{\YA}{Y_{\A}}
\newcommand{\IA}{I_{\A}}
\newcommand{\eA}{\eta(\A)}
\newcommand{\dA}{\delta(\A)}
\newcommand{\mA}{\min(\A)}
\newcommand{\Si}{\mbox{Sing}}
\newcommand{\Co}{\mbox{Conv}}
\newcommand{\J}{\mathcal J}
\begin{document}

\title{Nash modification on toric curves}

\author{Daniel Duarte\footnote{Research supported by CONACyT.}, Daniel Green Tripp\footnote{Research supported by 
FORDECyT-265667.}}

\maketitle

\begin{abstract}
We revisit the problem of resolution of singularities of toric curves by iterating Nash modification. We give a bound on the
number of iterations required to obtain the resolution. We also introduce a different approach on counting iterations by 
dividing the combinatorial algorithm of Nash modification of toric curves into several division algorithms.
\end{abstract}


\section{Introduction}

The Nash modification of an equidimensional algebraic variety is a modification that replaces singular points by 
limits of tangent spaces. It has been proposed to iterate this construction to obtain resolution of singularities (\cite{No,S}).
So far,  only in very few cases it is known that this method do work: the case of curves (\cite{No}) and 
the family $\{z^p+x^qy^r=0\}\subset\C^3$ for positive integers $p,q,r$ (\cite{R}). In \cite{No} it is also proved 
that Nash modification do not resolve singularities in positive characteristic. 

There is a variant of this problem which consists in iterating the Nash modification followed by normalization. This variant was 
extensively studied in $\cite{GS-1,GS-2,Hi,Sp}$, culminating with the theorem by M. Spivakovsky that normalized Nash modification 
solves singularities of complex surfaces.

Several years later, the original question received a new wave of attention with the appearance of several papers exploring 
the case of toric varieties (\cite{At,GT,GM,D}). It was proved in \cite{GT,GM} that the iteration of Nash modification
for toric varieties corresponds to a purely combinatorial algorithm on the semigroups defining the toric variety.
Using this combinatorial description, some new partial results were obtained in the mentioned papers. Nevertheless, as far 
as we know, the question for toric varieties has not been completely solved so far.

In this note, we take a step backward hoping that a better understanding of the simplest case (that of toric curves) might
throw some light on the problem for higher-dimensional toric varieties. 

As we mentioned before, it is already known that Nash modification solves singularities of curves. In this note we revisit this 
problem by giving a combinatorial proof of that fact for toric curves. In addition, we give a bound on the number of iterations 
required to obtain the desingularization. We also introduce a different approach on counting iterations. As we will see, the combinatorial
algorithm for toric curves can be divided into several division algorithms that record the significant improvements that take place during
the algorithm. We will give an effective bound for the number of such division algorithms. 

We conclude with a brief discussion on some special features of Nash modification of toric curves. These features include comments 
on Hilbert-Samuel multiplicity, embedding dimension, the support of an ideal defining the Nash modification, and a possible generalization 
of our results for toric surfaces. 


\section{Nash modification of a toric variety}

Let us start by recalling the definition of Nash modification of an equidimensional algebraic variety.

\begin{defi}
Let $X\subset\C^n$ be an equidimensional algebraic variety of dimension $d$. Consider the Gauss map:
\begin{align}
G:X\setminus\Si(X)&\rightarrow G(d,n)\notag\\
x&\mapsto T_xX,\notag
\end{align}
where $G(d,n)$ is the Grassmanian of $d$-dimensional vector spaces in $\C^n$, and $T_xX$ is the
tangent space to $X$ at $x$. Denote by $X^*$ the Zariski closure of the graph of $G$. Call $\nu$ 
the restriction to $X^*$ of the projection of $X\times G(d,n)$ to $X$. The pair $(X^*,\nu)$ is called the 
\textit{Nash modification} of $X$. 
\end{defi}

We are interested in studying this construction in the case of toric curves. Let us recall the definition of an affine toric variety 
(see, for instance, \cite[Section 1.1]{CLS} or \cite[Chapter 4]{St}).

Let $\A=\{a_1,\ldots,a_n\}\subset\Z^d$ be a finite set satisfying $\Z\A=\{\sum_i \lambda_i a_i|\lambda_i\in\Z\}=\Z^d$. 
The set $\A$ induces a homomorphism of semigroups
\begin{align}\label{e. pi}
\pi_{\A}:\N^n\rightarrow\Z^d,\mbox{ }\mbox{ }\mbox{ }\alpha=(\alpha_1,\ldots,\alpha_n)
\mapsto \alpha_1 a_1+\cdots+\alpha_n a_n.\notag
\end{align}
Consider the ideal
$$I_{\A}:=\langle x^{\alpha}-x^{\beta}|\alpha,\beta\in\N^n,\mbox{ }\pi_{\A}(\alpha)=\pi_{\A}
(\beta)\rangle\subset\Po.$$

\begin{defi}
We call $\YA:=\V(\IA)\subset\C^n$ the toric variety defined by $\A$.
\end{defi}


It is well known that a variety obtained in this way is irreducible, contains a dense open set isomorphic 
to $(\C^*)^d$ and such that the natural action of $(\C^*)^d$ on itself extends to an action on $\YA$. 

\subsection{Combinatorial algorithm for affine toric varieties}\label{s. algorithm}

It was recently proved that the iteration of the Nash modification of a toric variety $\YA$ corresponds to a combinatorial algorithm 
involving the elements in $\A$ (see \cite{GT, GM}). Here we follow the results as presented in \cite[Section 4]{GM}. 

Let $\A=\{a_1,\ldots,a_n\}\subset\Z^d$ be a finite set such that $\Z\A=\Z^d$ and $0\notin\Co(\A)$, where $\Co(\A)$ is the
convex hull of $\A$ in $\R^d$. Let $\YA\subset\C^n$ be the corresponding affine toric variety. The Nash modification $\YA^*$ of 
$\YA$ can be described as follows.

Given $n\in\N$, denote $[n]:=\{1,\ldots,n\}$. Let $J=(j_1,\ldots,j_d)\in [n]^d$, where $1\leq j_1<j_2<\ldots<j_d\leq n$. 
Denote $\det(a_J):=\det(a_{j_1}\cdots a_{j_d})$. Now fix $J_0\in[n]^d$ such that $\det(a_{J_0})\neq0$ and for any other 
$J\in[n]^d$ denote $\Delta_J^{J_0}:=\sum_{j\in J}a_j-\sum_{k\in J_0}a_k.$

\begin{teo} [\cite{GM}, Section 4] \label{t. covering}
$\YA^*$ is covered by affine toric varieties $Y_{\A_{J_0}}$, where 
$\A_{J_0}=\{a_j\}_{j\in J_0}\cup\{\Delta_J^{J_0}|\#(J\setminus J_0)=1,\det(a_J)\neq0\}$ and $0\notin\Co(\A_{J_0})$.
\end{teo}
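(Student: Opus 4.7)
The plan is to compute the Gauss map of $\YA$ explicitly on its dense torus, to describe its graph over each standard affine chart of the Grassmannian, and then to identify the Zariski closure of that graph as an affine toric variety with the advertised generating set. Concretely, parametrize the dense torus of $\YA$ by $t\mapsto(t^{a_1},\ldots,t^{a_n})$ and differentiate to obtain a $d\times n$ Jacobian whose row span equals $T_t\YA$. Computing its $d\times d$ minors and clearing a common monomial factor, the Gauss map in Plücker coordinates sends $t$ to the projective point $\bigl(\det(a_J)\,t^{\sum_{j\in J}a_j}\bigr)_{J\in[n]^d}$.

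Next, fix $J_0\in[n]^d$ with $\det(a_{J_0})\neq 0$ and work on the standard chart $U_{J_0}\subset G(d,n)$ with local coordinates $y_J=p_J/p_{J_0}$ indexed by those $J$ differing from $J_0$ in exactly one entry. Pulling back along the Gauss map yields
\[
y_J=\tfrac{\det(a_J)}{\det(a_{J_0})}\,t^{\Delta_J^{J_0}},
\]
which is a scaled monomial in $t$ when $\det(a_J)\neq 0$ and vanishes identically otherwise. Hence the graph of the Gauss map over $U_{J_0}$ is parametrized by the family of monomials
\[
\{t^{a_j}\}_{j\in[n]}\ \cup\ \bigl\{t^{\Delta_J^{J_0}}\,:\,\#(J\setminus J_0)=1,\ \det(a_J)\neq 0\bigr\},
\]
and its Zariski closure is, by definition, the affine toric variety attached to the corresponding subset of $\Z^d$. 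These charts cover $\YA^*$: at every point of $G(d,n)$ some Plücker coordinate is nonzero, and on the image of the Gauss map only the coordinates indexed by $J_0$ with $\det(a_{J_0})\neq 0$ can be nonzero, a property preserved in the closure.

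It remains to prune the generating list down to $\A_{J_0}$. For $j\notin J_0$, expand $a_j=\sum_{k\in J_0}c_k\,a_k$ in the $\Q$-basis $\{a_k\}_{k\in J_0}$; since $0\notin\Co(\A)$ forces $a_j\neq 0$, some $c_k\neq 0$, and a Cramer-style computation identifies this with $\det(a_{(J_0\setminus\{k\})\cup\{j\}})\neq 0$. For such $k$, one has $a_j=a_k+\Delta_{(J_0\setminus\{k\})\cup\{j\}}^{J_0}$, so $a_j$ already lies in the semigroup generated by $\A_{J_0}$ and may be dropped from the parametrization without changing the resulting toric variety; this argument also delivers $\Z\A_{J_0}=\Z^d$. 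I expect the main obstacle to be verifying $0\notin\Co(\A_{J_0})$: the differences $\Delta_J^{J_0}$ can point in either direction, so one cannot simply inherit a separating hyperplane from the one witnessing $0\notin\Co(\A)$. My plan of attack is to construct, for each admissible $J_0$, a linear functional on $\R^d$ that is strictly positive on every element of $\A_{J_0}$, exploiting $\det(a_{J_0})\neq 0$ to ensure that the required system of strict inequalities is consistent.
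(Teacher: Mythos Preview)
The paper does not supply a proof of this theorem; it is quoted from \cite{GM} without argument, so there is no in-paper proof to compare against.

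Your Pl\"ucker computation and the pruning step (showing each $a_j$ with $j\notin J_0$ lies in the semigroup generated by $\A_{J_0}$) are both correct. The genuine gap is a misreading of the clause $0\notin\Co(\A_{J_0})$. This is not a conclusion holding for every $J_0$ with $\det(a_{J_0})\neq 0$; it is a \emph{selection criterion}: the theorem asserts that the subfamily of charts indexed by those $J_0$ for which $0\notin\Co(\A_{J_0})$ already suffices to cover $\YA^*$. Your plan to construct, for each admissible $J_0$, a linear functional strictly positive on all of $\A_{J_0}$ cannot succeed, because the desired conclusion is simply false in that generality. Already for $d=1$ with $\A=\{a_1<\cdots<a_n\}\subset\Z_{>0}$ and $J_0=(j)$ for any $j>1$, the set $\A_{J_0}$ contains both $a_j>0$ and $a_1-a_j<0$, so $0\in\Co(\A_{J_0})$. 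The paper itself makes this explicit when specializing to curves, noting that only $J_0=(1)$ survives.

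What your outline is missing is exactly the step that carries the content: showing that the charts with $0\in\Co(\A_{J_0})$ are redundant, i.e., that every point of $\YA^*$ already lies in some chart $U_{J_0}$ with $0\notin\Co(\A_{J_0})$. Equivalently, one must argue that among the Pl\"ucker coordinates not vanishing at a given point of $\YA^*$ there is always one indexed by a $J_0$ satisfying the convexity condition. Your covering argument only establishes that the full family over all $J_0$ with $\det(a_{J_0})\neq 0$ covers, which is the easy part.
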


In addition, we consider the following criterion of non-singularity for toric varieties.

\begin{pro} [\cite{GM}, Criterion 3.16]\label{p. non singular}
Let $\A\subset\Z^d$ be as before. The affine toric variety $\YA$ is non-singular if and only if there are $d$ elements in $\N\A$
such that $\N\A$ is generated by those elements as a semigroup.
\end{pro}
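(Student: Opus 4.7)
The idea is to translate smoothness of $\YA$ into a combinatorial condition on the affine semigroup $S:=\N\A$, by working at the distinguished torus-fixed point. The hypothesis $0\notin\Co(\A)$ guarantees that the cone $\R_{\geq 0}\A$ is strongly convex; equivalently, $S$ is \emph{sharp}, so it admits a well-defined finite set of minimal semigroup generators and the origin $p=0\in\C^n$ is a (torus-fixed) point of $\YA$. The $\Z$-algebra $\C[S]$ comes with an $S$-grading $\C[S]=\bigoplus_{a\in S}\C\cdot\chi^a$ in which each graded piece is one-dimensional, and this grading will do most of the work.

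For the ``if'' direction, assume $S$ is generated as a semigroup by $d$ elements $b_1,\ldots,b_d$. Since $\Z S=\Z\A=\Z^d$, the $b_i$ generate $\Z^d$ as an abelian group, and since $\Z^d$ has rank $d$, they form a $\Z$-basis of $\Z^d$. Their $\Z$-linear independence implies that the semigroup homomorphism $\N^d\to S$, $(m_1,\ldots,m_d)\mapsto\sum_i m_i b_i$, is a bijection. Passing to semigroup algebras gives $\C[S]\cong\C[x_1,\ldots,x_d]$, so $\YA\cong\C^d$ is non-singular.

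For the ``only if'' direction, I would work with the maximal ideal $\mathfrak{m}_p\subset\C[S]$ at the origin, which is the homogeneous ideal spanned by $\{\chi^a:a\in S\setminus\{0\}\}$. Its square $\mathfrak{m}_p^2$ is spanned by the characters $\chi^a$ for which $a=b+c$ with $b,c\in S\setminus\{0\}$. Consequently the residues $\overline{\chi^a}$ with $a$ a minimal semigroup generator of $S$ form a $\C$-basis of $\mathfrak{m}_p/\mathfrak{m}_p^2$, and the embedding dimension of $\YA$ at $p$ equals the number of minimal generators of $S$. Non-singularity of $\YA$ at $p$ then forces this number to be $\dim\YA=d$, and these $d$ minimal generators generate $S$ as a semigroup by construction.

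The step requiring the most care is the description of $\mathfrak{m}_p/\mathfrak{m}_p^2$: one must use the $S$-homogeneity of $\mathfrak{m}_p^2$ together with the one-dimensionality of each graded piece $\C\cdot\chi^a$ to conclude that $\chi^a\notin\mathfrak{m}_p^2$ precisely when $a$ is a minimal generator. The remainder of the argument is then formal.
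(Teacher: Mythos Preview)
The paper does not supply its own proof of this proposition; it is quoted verbatim as Criterion~3.16 of \cite{GM}, with no argument given. There is therefore nothing in the paper to compare your proof against.

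That said, your proof is correct and is essentially the standard argument in toric geometry (compare \cite[\S1.3]{CLS}). The key point---that for a sharp affine semigroup $S$ the cotangent space $\mathfrak{m}_p/\mathfrak{m}_p^2$ at the torus-fixed point has as basis the classes $\overline{\chi^a}$ indexed by the Hilbert (minimal) generators of $S$---is exactly what you isolate, and the $S$-grading together with one-dimensionality of each graded piece makes this transparent. One cosmetic remark: in the ``only if'' direction you use smoothness only at the distinguished point $p$, which is of course implied by global smoothness; the converse implication (that smoothness at $p$ forces $\YA\cong\C^d$, hence smoothness everywhere) also follows from your ``if'' argument, so the criterion is really a local one at the origin.
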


It follows from the theorem and proposition that iteration of Nash modification of toric varieties coincides with the following 
combinatorial algorithm:

\begin{algo}\label{algo gral}
\begin{enumerate}
\item Let $\A=\{a_1,\ldots,a_n\}\subset\Z^d$ be a finite set such that $\Z\A=\Z^d$ and $0\notin\Co(\A)$.
\item For every $J_0\in[n]^d$ such that $\det(a_{J_0})\neq0$ consider those sets $\A_{J_0}$ satisfying $0\notin\Co(\A_{J_0})$.
\item If every $\N\A_{J_0}$ from step 2 is generated by $d$ elements then stop. Otherwise replace $\A$ by each $\A_{J_0}$ 
that is not generated by $d$ elements and return to step 1. 
\end{enumerate}
\end{algo}

\subsection{Combinatorial algorithm for affine toric curves}

We are interested in applying the previous algorithm for toric curves. Let us describe how the algorithm looks like in this special case.

First, we can assume that $\A=\{a_1,\ldots,a_n\}\subset\Z$ is such that $0<a_1<...<a_n$. This follows from the fact that
$0\notin\Co(\A)$ if and only if $\A\subset\N$ or $\A\subset\Z\setminus\N$ and that $Y_{\A}\cong Y_{-\A}$. In addition, the
condition $\Z\A=\Z$ is equivalent to $\gcd(\A)=1$. For these reasons, from now on, $\A\subset\Z$ will denote a finite set 
$\{a_1,\ldots,a_n\}$ such that 
\begin{equation}\label{cond}
0<a_1<\ldots< a_n \mbox{ and } \gcd(\A)=1. \tag{$\ast$}
\end{equation}

Finally, proposition \ref{p. non singular} is equivalent to ask that $1\in\A$ for toric curves.

Now we can describe algorithm \ref{algo gral} in this case. Observe that in this case only for $J_0=(1)\in[n]$ it happens 
that $0\notin\Co(\A_{J_0})$ (see step 2 of algorithm \ref{algo gral}). In other words, the Nash modification of $\YA$ is contained 
in a single affine chart according to Theorem \ref{t. covering}. 

\begin{algo}\label{algo}
\begin{enumerate} 
\item Let $\A=\{a_1,\ldots,a_n\}\subset\Z$ be a finite set satisfying (\ref{cond}).
\item Let $\A':=\{a_1\}\cup\{a_2-a_1,\ldots,a_n-a_1\}$.
\item If $1\in\A'$ the algorithm stops. Otherwise replace $\A$ with $\A'$ and return to step 1.
\end{enumerate}
\end{algo}

\begin{exam}
Let $\A=\{12,28,33\}$. Then:
\begin{align}
\A'&=\{12\}\cup\{28-12,33-12\}=\{12,16,21\},\notag\\
\A''&=\{12\}\cup\{16-12,21-12\}=\{4,9,12\},\notag\\
\A'''&=\{4\}\cup\{9-4,12-4\}=\{4,5,8\},\notag\\
\A''''&=\{4\}\cup\{5-4,8-4\}=\{1,4\}.\notag
\end{align}
Since $1\in\A''''$, the algorithm stops, that is, $Y_{\A''''}$ is a non-singular curve. 
\end{exam}

\begin{rem}
Notice the resemblance of this algorithm with Euclid algorithm (this is why in \cite{GM} algorithm \ref{algo gral} is called
\textit{multidimensional Euclidean algorithm}). Actually, for $n=2$, algorithm \ref{algo} is exactly Euclid algorithm for $a_1$ and $a_2$.
\end{rem} 

\begin{rem}\label{r. redundant}
Let $\A$ be as before. If $a_i=qa_1$ for some $q\in\N$ and $i>1$ then $\N\A=\N(\A\setminus\{a_i\})$. In particular, 
$Y_\A \cong Y_{\A\setminus\{a_i\}}$. Therefore, some times we will assume that no multiples of the minimum of $\A$ other than
itself appear in $\A$. 
\end{rem}


\section{Resolution of toric curves and number of iterations}

It is well known that the iteration of Nash modification resolves singularities of curves (\cite[Corollary 1]{No}).
In this section we revisit this problem for toric curves giving a combinatorial proof in this special case. We also consider 
the problem of finding a bound for the number of iterations required to obtain a non-singular curve.

\subsection{Resolution of toric curves}

Given a finite set $\A\subset\Z$ satisfying (\ref{cond}), we denote $\A^{1}:=\A'$, where $\A'$ is the set obtained after
applying once algorithm \ref{algo} to $\A$. Similarly, $\A^{k}:=(\A^{k-1})'$, for $k\geq2$.

\begin{lem}\label{l. one iteration}
Let $\A=\{a_1,\ldots,a_n\}\subset\Z$ be a finite set satisfying (\ref{cond}) and such that $1<a_1$. Then $\min(\A^1)\leq\min(\A)$. 
In addition, there is $k\in\N$ such that  $\min(\A^k)<\min(\A)$.
\end{lem}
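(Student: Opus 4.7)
The plan is to handle the two assertions separately.

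For the inequality $\min(\A^1) \le \min(\A)$, I would just invoke the definition of Algorithm \ref{algo}: step 2 retains $a_1 = \min(\A)$ as an element of $\A^1$, so $\min(\A^1) \le a_1 = \min(\A)$. The same observation, applied iteratively, gives $\min(\A^k) \le a_1$ for every $k \ge 0$.

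For the existence of some $k$ with $\min(\A^k) < a_1$, the strategy is to follow a single well-chosen element of $\A$ down through the iteration until it is forced below $a_1$. Since $\gcd(\A) = 1$ and $a_1 > 1$, not every element of $\A$ can be a multiple of $a_1$; otherwise $a_1$ would divide $\gcd(\A) = 1$, forcing $a_1 = 1$. So I would fix an index $i > 1$ with $a_1 \nmid a_i$ and write $a_i = q a_1 + r$ with $0 < r < a_1$; note $q \ge 1$ since $a_i > a_1$.

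The key technical step is the following descent claim, which I would prove by induction on $k$: if $1 \le k \le q$ and $\min(\A^j) = a_1$ for every $0 \le j < k$, then $a_i - k a_1 \in \A^k$. At each such step Algorithm \ref{algo} subtracts $a_1$ from every element distinct from $a_1$, and the descendant $a_i - (k-1) a_1 = (q - k + 1) a_1 + r$ differs from $a_1$ because $r \ne 0$ and $q - k + 1 \ge 1$, so it survives and becomes $a_i - k a_1$ in $\A^k$. Applying the claim at $k = q$ yields $r = a_i - q a_1 \in \A^q$, hence $\min(\A^q) \le r < a_1$. If $\min(\A^j) = a_1$ held for all $j < q$, the conclusion is immediate at $k = q$; otherwise, some earlier step already satisfies $\min(\A^k) < a_1$.

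The only delicate point will be the descent claim, where one must guarantee that the tracked element never collides with the fixed minimum $a_1$ (which would remove it at the next iteration); the choice $a_1 \nmid a_i$ is precisely what rules this out, and is in turn what the hypotheses $\gcd(\A) = 1$ and $a_1 > 1$ buy us.
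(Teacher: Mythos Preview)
Your proof is correct and follows the same core strategy as the paper: both arguments use the division algorithm on $a_1$ and a second element, then track the descendant through $q$ iterations until the remainder $r<a_1$ surfaces in $\A^q$. The only differences are cosmetic---you get $\min(\A^1)\le a_1$ in one line from $a_1\in\A^1$ and track an arbitrary non-multiple $a_i$ under the disjunction ``either the minimum drops early or it stays $a_1$ through step $q$,'' whereas the paper first strips multiples of $a_1$ (Remark~\ref{r. redundant}) and works with $a_2$ specifically, which lets it compute $\A^l$ exactly for all $l\le q$ and conclude the sharper equality $\min(\A^q)=r$.
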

\begin{proof}
From remark \ref{r. redundant}, we can assume that $\A$ contains no multiples of $\mA$. From algorithm 
\ref{algo} we know that $\A^1=\{a_1,a_2-a_1,\ldots,a_n-a_1\}$. From the assumption on $\A$, $a_1\neq a_2-a_1$. 
If $a_1<a_2-a_1$ then $\min(\A^1)=a_1=\min(\A)$. If $a_2-a_1<a_1$ then $\min(\A^1)<\min(\A)$.
This proves that $\min(\A^1)\leq\min(\A)$. 

Now we perform division algorithm for $a_1$ and $a_2$ (since $1<a_1$ then $|\A|\geq2$ according to (\ref{cond})): 
there exist $q,r\in\N$ such that $a_2=a_1q+r$, where $0<r<a_1$. Notice that for any $0<l<q$, 
$\A^l=\{a_1,a_2-la_1,\ldots,a_n-la_1\}$, where $a_1<a_2-la_1<\ldots<a_n-la_1$ and 
$\A^q=\{a_1,a_2-qa_1,\ldots,a_n-qa_1\}$, where $a_2-qa_1=r<a_1$ and $a_2-qa_1<a_3-qa_1<\ldots<a_n-qa_1$.
Hence, $\min(\A^q)=a_2-qa_1=r<a_1=\min(\A)$.
\end{proof}

\begin{rem}
The proof of the previous lemma shows that the smallest $k$ such that $\min(\A^k)<\min(\A)$ can be obtained from
the division algorithm applied to the two smallest elements of $\A$. 
In particular, algorithm \ref{algo} can be divided into several division algorithms that record the significant improvements
of the algorithm (see section \ref{s. counting}).
\end{rem}

\begin{rem}
It is also important to observe (following the notation of the proof of the lemma) that $\min(\A^q\setminus\{r\})$ is not necessarily 
$a_1$ (consider for example $\A=\{7,17,19\}$). 
\end{rem}

\begin{pro}\label{p. Nash resolves}
A finite iteration of Nash modification resolves singularities of toric curves.
\end{pro}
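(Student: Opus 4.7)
The plan is to apply Lemma \ref{l. one iteration} iteratively, using the well-ordering of $\N$ to force termination. The key point is that the lemma gives a monovariant: the minimum of $\A$, which is a positive integer, eventually strictly decreases under the algorithm whenever $1<\mA$.

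First I would check that condition (\ref{cond}) is preserved by the operation $\A\mapsto\A^1$: positivity follows from $a_i-a_1>0$ for $i\geq2$ together with $a_1>0$; and $\gcd(\A^1)=\gcd(a_1,a_2-a_1,\ldots,a_n-a_1)=\gcd(a_1,\ldots,a_n)=1$. So (after reordering and, if one wishes, applying remark \ref{r. redundant} to delete multiples of the new minimum) every $\A^k$ again satisfies (\ref{cond}), which licenses the repeated use of Lemma \ref{l. one iteration}.

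Next, starting from $\A=\A^0$, I would inductively build a sequence $0=k_0<k_1<k_2<\cdots$ defined by letting $k_{i+1}$ be the smallest index (granted by Lemma \ref{l. one iteration}) for which $\min(\A^{k_{i+1}})<\min(\A^{k_i})$, as long as $1<\min(\A^{k_i})$. This produces a strictly decreasing sequence
\begin{equation*}
\min(\A^{k_0})>\min(\A^{k_1})>\min(\A^{k_2})>\cdots
\end{equation*}
of positive integers, which must stop after finitely many steps. By construction the only way the procedure halts is that some $\A^{k_N}$ fails the hypothesis of Lemma \ref{l. one iteration}, i.e. $\min(\A^{k_N})=1$. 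Equivalently, $1\in\A^{k_N}$, and by the discussion following Proposition \ref{p. non singular} this is precisely the non-singularity criterion for toric curves, so algorithm \ref{algo} terminates at step $k_N$ and $Y_{\A^{k_N}}$ is smooth.

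There is really no hard step here once Lemma \ref{l. one iteration} is in hand; the only thing one must be careful about is maintaining the standing hypotheses of the algorithm along the sequence of iterates (positivity, coprimality, and the convention that $\A$ be listed in strictly increasing order), all of which are routine. The proposition then follows from the identification in Theorem \ref{t. covering} of Nash modification with algorithm \ref{algo}.
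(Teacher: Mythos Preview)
Your proof is correct and follows essentially the same approach as the paper's: iterate Lemma \ref{l. one iteration} to produce a strictly decreasing sequence of minima in $\N$, forcing termination at $1$. The only cosmetic difference is that the paper identifies each drop explicitly via the division algorithm (as in the proof of the lemma) rather than abstractly citing its existence, and it leaves the preservation of (\ref{cond}) implicit where you spell it out.
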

\begin{proof}
Let $\YA$ be a toric curve defined by $\A=\{a_1,\ldots,a_n\}\subset\Z$ satisfying (\ref{cond}) and such that $1<a_1$. 
Assume that $\A$ contains no multiples of $\mA$.
By the division algorithm, $a_2=a_1q+r$, where $0<r<a_1$. From the proof of lemma \ref{l. one iteration}, it follows that 
$1\leq\min(\A^q)<a_1$. If $1=\min(\A_q)$ then $ Y_{\A^q}$ is non-singular. Otherwise repeat the process for $\A^q$. Continuing
like this we obtain a sequence $a_1>\min(\A^q)>\min(\A^{q_1})>...\geq1$. This decreasing sequence cannot be infinite, so $1\in\A^k$,
for some $k$.
\end{proof}

\begin{rem}
There is a geometric interpretation of the number $\min(\A)$ in terms of Hilbert-Samuel multiplicity (see section \ref{s. features}).
\end{rem}

\subsection{Number of iterations}

Now we consider the problem of giving a bound to the number of iterations required to resolve a toric curve using Nash modification.

\begin{defi}
Let $\A\subset\Z$ be a finite set satisfying (\ref{cond}).
We denote as $\eta(\A)$ the number of iterations required to solve the singularities of $\YA$ using Nash modification. 
In other words, $\eta(\A)$ is the minimum $k\in\N$ such that $1\in\A^k$.
\end{defi}

In general, to estimate $\eA$ is not a simple task. Let $\A:=\{a_1^{(0)},\ldots,a_{n_0}^{(0)}\}$ be such that 
no multiple of $\mA$ (other than itself) is contained in $\A$.
Following the proof of lemma \ref{l. one iteration}, algorithm \ref{algo} can be summarized as follows 
(at every row we assume that $\A^q_i$ contains no multiples of $\min(\A^q_i)$ and its elements are 
ordered increasingly):
\begin{align}\label{algo res}
\A&=\{a_1^{(0)},a_2^{(0)},\ldots,a_{n_0}^{(0)}\}, 	
&&a_2^{(0)}=a_1^{(0)}q_1+r_1,\notag\\
\A^{q_1}&=\{a_1^{(1)},a_2^{(1)},\ldots,a_{n_1}^{(1)}\}, 
&&a_1^{(1)}=r_1,\mbox{ }\mbox{ } a_2^{(1)}=a_1^{(1)}q_2+r_2,\notag\\
\A^{q_2}&=\{a_1^{(2)},a_2^{(2)},\ldots,a_{n_2}^{(2)}\},	
&&a_1^{(2)}=r_2, \mbox{ }\mbox{ }a_2^{(2)}=a_1^{(2)}q_3+r_3,\\
&\mbox{ }\mbox{ }\vdots 				&&\mbox{ }\mbox{ }\mbox{ }\mbox{ }\vdots\notag\\
\A^{q_{s-1}}&=\{a_1^{(s-1)},a_2^{(s-1)},\ldots,a_{n_{s-1}}^{(s-1)}\},	
&&a_1^{(s-1)}=r_{s-1},\mbox{ }\mbox{ } a_2^{(s-1)}=a_1^{(s-1)}q_{s}+1.\notag\\
\A^{q_s}&=\{1\}.\notag
\end{align}

Summarized like this, $\eA=q_1+q_2+\cdots+q_s$, by definition. The input of the algorithm is $\A$, thus we can compute 
$q_1$ directly from it. Unfortunately, for $i\geq2$, it is not clear how to bound the numbers $q_i$ since, to begin with, there 
is no control over the values of $a_1^{(i)}$ and $a_2^{(i)}$. Those values depend on how $a_2^{(0)},\ldots,a_{n_0}^{(0)}$ 
are distributed.



There is, however, another way to bound $\eA$. The following proposition was adapted from \cite[Lemma 5.5]{D}.

\begin{pro}\label{p. Nash resolution}
Let $\A=\{a_1,\ldots,a_n\}\subset\Z$ be such that (\ref{cond}) holds. Let
$$v(\A):=\min\{a_i|\gcd(a_1,\ldots,a_i)=1\}.$$
Then $v(\A^1)\leq v(\A)-2$. Therefore, if $a_1>1$, after at most $\lfloor\frac{v(\A)}{2}\rfloor$ iterations of the algorithm, the resulting 
set contains 1. In other words, $\eA\leq\lfloor\frac{v(\A)}{2}\rfloor$.
\end{pro}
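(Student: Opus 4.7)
My plan is to establish the single-step inequality $v(\A^1) \le v(\A) - 2$ and then iterate it. The guiding idea for the one-step bound is to exhibit an explicit subset of $\A^1$ of gcd $1$ whose largest element is at most $v(\A)-2$; any such witness forces $v(\A^1)$ to lie below that maximum.

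First I would invoke the elementary identity
\[
\gcd(a_1,\,a_{j_1}-a_1,\ldots,a_{j_k}-a_1)=\gcd(a_1,a_{j_1},\ldots,a_{j_k}),
\]
which shows that the passage $\A\mapsto\A^1$ preserves gcd relations involving $a_1$. Letting $i$ be the smallest index with $\gcd(a_1,\ldots,a_i)=1$, so that $v(\A)=a_i$ and $i\ge 2$ (using $a_1>1$), the elements $a_1,\,a_2-a_1,\ldots,a_i-a_1$ all lie in $\A^1$ and still have gcd $1$. Their maximum is $M:=\max(a_1,\,a_i-a_1)$. Since $a_1\ge 2$ I have $a_i-a_1\le a_i-2$, and since $a_1\le a_i-1$ the estimate $M\le a_i-2$ can fail only when $a_1=a_i-1$; in that edge case $a_i-a_1=1\in\A^1$, forcing $v(\A^1)=1\le a_i-2$ directly (note $a_i\ge 3$). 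Otherwise, reordering $\A^1$ increasingly as $b_1<\cdots<b_m$, the witnesses sit inside $\{b_1,\ldots,b_{j_0}\}$ where $b_{j_0}=M$, so $\gcd(b_1,\ldots,b_{j_0})=1$ and $v(\A^1)\le b_{j_0}=M\le v(\A)-2$.

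For the iteration, I would observe that $v(\A^k)$ never equals $2$: either $\min(\A^k)=1$, in which case $v(\A^k)=1$, or $\min(\A^k)\ge 2$, which forces the defining index of $v$ to be at least $2$ and hence $v(\A^k)>\min(\A^k)\ge 2$. Consequently, as long as the algorithm has not terminated, the one-step bound applies and $v$ drops by at least $2$ per iteration; therefore $v$ must reach $1$ within $\lfloor v(\A)/2\rfloor$ iterations, i.e.\ $\eA\le\lfloor v(\A)/2\rfloor$. The single delicate moment in this plan is the edge case $a_1=a_i-1$ in the one-step bound, where the plain estimate on $M$ fails and one has to exploit the fact that $1$ appears in $\A^1$ on the nose.
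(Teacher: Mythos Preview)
Your proof is correct and follows essentially the same approach as the paper: both exhibit the subset $\{a_1,a_2-a_1,\ldots,a_i-a_1\}\subset\A^1$ with $\gcd$ equal to $1$, bound $v(\A^1)$ by its maximum $M=\max(a_1,a_i-a_1)$, and isolate the edge case $a_1=a_i-1$ (equivalently $a_i=a_1+1$) where $1\in\A^1$ directly. Your extra observation that $v(\A^k)\neq 2$ to justify the floor in the iteration bound is a detail the paper leaves implicit.
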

\begin{proof}
Assume that $1<a_1$. Let $a_{k}=v(\A)$, where $2\leq k\leq n$. By applying once the algorithm we obtain, in particular, 
$\{a_1,a_2-a_1,\ldots,a_k-a_1\}\subset\A^1$. Call $N=\max\{a_1,a_k-a_1\}$. Since $\gcd(a_1,a_2-a_1,\ldots,a_k-a_1)=1$ we have 
$v(\A^1)\leq N$. If $N=a_k-a_1$ then, since $a_1\geq2$ we have $v(\A^1)\leq a_k-a_1\leq v(\A)-2$. Suppose now that $N=a_1$. 
If $a_k=a_1+1$ then $a_k\geq3$ and since $1=a_k-a_1\in\A^1$ we obtain $v(\A^1)=1<3\leq a_k=v(\A)$, so $v(\A^1)\leq v(\A)-2$.
Otherwise $a_k>a_1+1$ which implies $v(\A^1)\leq a_1\leq a_k-2=v(\A)-2$. This proves the proposition.
\end{proof}

Unfortunately, the bound given in the previous proposition for $\eA$ is not accurate in general, as the following example shows.

\begin{exam}
Let $\A=\{10^{10},2\cdot 10^{10}+1\}$. Then $\A^1=\{10^{10},10^{10}+1\}$ and $\A^2=\{1,10^{10}\}$. Thus, it took two 
iterations to terminate the algorithm but the bound is $\lfloor\frac{v(\A)}{2}\rfloor=10^{10}$.
\end{exam}


Eventhough we did not succeed in giving a more precise bound for $\eA$, the summary of algorithm \ref{algo} given in (\ref{algo res})
shows that the algorithm can be divided into several division algorithms. In the next section we give a bound for the number of those
division algorithms (the number $s$ in the notation of (\ref{algo res})). As we will see, this is a simpler task.


\section{Counting division algorithms}\label{s. counting}

Let us start with an example.

\begin{exam}
In this example, it takes twelve iterations of algorithm \ref{algo} to get the element 1 which can be summarized in three 
division algorithms.
\begin{align}
\A&=\{20,165,172\},\notag\\		
\A^8&=\{20,165- 20\cdot8, 172-20\cdot8\}=\{5,12,20\},\notag\\
\A^{10}&=\{5,12-5\cdot2,20-5\cdot2\}=\{2,5,10\}\notag\\
\A^{12}&=\{2,5-2\cdot2,10-2\cdot2\}=\{1,2,6\}.\notag
\end{align}
\end{exam}

\begin{defi}
Let $\A=\{a_1,\ldots,a_n\}\subset\Z$ be such that (\ref{cond}) holds. Assume that $1<a_1$. We denote as $\delta(\A)$ the 
number of division algorithms required for algorithm \ref{algo} to stop (this corresponds to number $s$ in (\ref{algo res})). 
\end{defi}

\begin{rem}
Notice that $\dA\leq\eA$.
\end{rem}

The goal of this section is to give a bound for $\dA$. As we will see, the bound depends only on the first two elements of $\A$. 
First we need to introduce the Fibonacci sequence.

\begin{defi} The \textit{Fibonacci sequence} is defined as
$$F_1 := 1, \ F_2 := 2 \mbox{ and }  F_{m+2} := F_{m+1} + F_{m} \mbox{ for each }  m\geq 1.$$
\end{defi}

The following theorem is a direct generalization of the analogous result for $n=2$, which seems to be well known
(see, for instance, \cite{Gr}).

\begin{teo}\label{t. bound}
Let $\A=\{a_1,\ldots,a_n\}\subset\Z$ be such that (\ref{cond}) holds and $1<a_1$. Then $a_1\geq F_{\delta(\A)+1}$ and 
$a_2 \geq F_{\delta(\A)+2}$.
\end{teo}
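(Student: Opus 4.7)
The plan is to argue by induction on $s := \delta(\A)$, mimicking Lamé's classical Fibonacci lower bound for the number of steps in the Euclidean algorithm (the case $n = 2$). Thanks to Remark \ref{r. redundant} we may assume throughout that $\A$ contains no proper multiples of $a_1$; this normalization preserves $a_1$ and can only shrink $a_2$, so it suffices to establish the bound in that setting. For the base case $s = 1$, the hypothesis $1 < a_1$ immediately gives $a_1 \geq 2 = F_2$, and $a_2 > a_1 \geq 2$ gives $a_2 \geq 3 = F_3$.

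For the inductive step, suppose $s \geq 2$ and that the claim holds for every set with $\delta$-value $s - 1$. Write the first division as $a_2 = a_1 q_1 + r_1$ with $0 < r_1 < a_1$ and $q_1 \geq 1$ (the latter because $a_2 > a_1$). Since $s \geq 2$ we must have $r_1 \geq 2$; otherwise a single division algorithm would already terminate the process. By the summary (\ref{algo res}), $\A^{q_1}$ has minimum $a_1^{(1)} = r_1$, second smallest element $a_2^{(1)}$, and satisfies $\delta(\A^{q_1}) = s - 1$. Since $\gcd$ is preserved under one step of Algorithm \ref{algo}, the set $\A^{q_1}$ still meets condition (\ref{cond}), so the inductive hypothesis applies and yields $r_1 = a_1^{(1)} \geq F_s$ together with $a_2^{(1)} \geq F_{s+1}$.

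The crux is to promote these inequalities to $a_1 \geq F_{s+1}$, which I handle by a case split according to whether $a_1$ survives the normalization of $\A^{q_1}$. If $r_1 \nmid a_1$, then $a_1$ remains an element of the normalized $\A^{q_1}$, and being distinct from the minimum it satisfies $a_1 \geq a_2^{(1)} \geq F_{s+1}$. If instead $r_1 \mid a_1$, then $a_1 = k r_1$ with $k \geq 2$ (since $a_1 > r_1$), so $a_1 \geq 2 r_1 \geq 2 F_s \geq F_{s+1}$, the last step using the elementary inequality $F_{s+1} = F_s + F_{s-1} \leq 2 F_s$. Once $a_1 \geq F_{s+1}$ is in hand, the identity $a_2 = a_1 q_1 + r_1 \geq a_1 + r_1 \geq F_{s+1} + F_s = F_{s+2}$ closes the induction. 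The only real delicacy is precisely this case split, which accounts for the possibility that $a_1$ is absorbed as a multiple of the new minimum $r_1$ during the normalization of $\A^{q_1}$; everything else is a direct lift of Lamé's argument from $n = 2$ to the multi-element setting.
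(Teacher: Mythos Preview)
Your argument follows the same inductive route as the paper's proof, and the heart of the induction---the division step together with the Fibonacci recurrence $a_2 \geq a_1 + r_1 \geq F_{s+1} + F_s = F_{s+2}$---is correct. There is, however, a slip in your opening reduction: removing proper multiples of $a_1$ from $\A$ can only \emph{increase} the second-smallest element, not shrink it (for instance $\{3,6,7\}$ becomes $\{3,7\}$), so the sentence ``can only shrink $a_2$'' is backwards. The WLOG is nonetheless valid, but for a different reason: if the original $a_2$ happens to be a multiple of $a_1$ then $a_2 \geq 2a_1 \geq 2F_{\delta(\A)+1} \geq F_{\delta(\A)+2}$, while otherwise $a_2$ survives normalization unchanged.

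Your case split on whether $r_1 \mid a_1$ is a correct way to handle the normalized $\A^{q_1}$; the paper sidesteps this split by applying the induction hypothesis to the \emph{non}-normalized $\A^{q}$, where $a_1$ is always present and strictly larger than the minimum $r$, so that $a_1 \geq \min(\A^q\setminus\{r\}) \geq F_{m+2}$ follows in one line. The two arguments are otherwise the same.
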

\begin{proof}
We proceed by induction on $\delta(\A)$. For $\delta(\A)=1$ the result is true since, by hypothesis, $a_1\geq2=F_{1+1}$ and 
$a_2\geq a_1+1\geq2+1=3= F_{1+2}$. 

Assume that the statement is true for all $\A$ such that $\delta(\A)=m$. Suppose that $\delta(\A)=m+1$. Assume that
$\A$ does not contain multiples of $\min(\A)$. Applying the division algorithm to $a_1$ and $a_2$ we obtain $a_2=a_1q+r$, where
$0<r<a_1$. Then $\delta(\A^q)=m$ and $\min(\A^q)=r$. By induction, 
$r\geq F_{m+1}$. Let us prove that $a_1\geq F_{m+2}$. Since $a_1>r$ then $a_1\geq\min(\A^q\setminus\{r\})$. On the other 
hand, by induction we know that $\min (\A^q \setminus\{r\})\geq F_{m+2}$. Consecuently, $a_1 \geq F_{m+2}$.

Thus, $r\geq F_{m+1}$ and $a_1\geq F_{m+2}$. Finally, since $q \geq 1$ it follows 
$a_2=a_1q+r\geq a_1+r\geq F_{m+2}+F_{m+1}=F_{m+3}$. Therefore, $a_1\geq F_{m+2}=F_{\delta(\A)+1}$ and 
$a_2 \geq  F_{m+3} = F_{\delta(\A)+2}$. 
\end{proof}

\begin{coro}
Let $\A=\{a_1,\ldots,a_n\}\subset\Z$ such that (\ref{cond}) holds. Assume $1<a_1$. Let $m\in\N$ be such that $a_1<F_{m+1}$ or $a_2<F_{m+2}$. 
Then $\delta (\A) < m$.
\end{coro}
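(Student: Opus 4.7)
The plan is to deduce the corollary as the direct contrapositive of Theorem \ref{t. bound}. So I would argue by contradiction: suppose $\delta(\A) \geq m$, and derive that both $a_1 \geq F_{m+1}$ and $a_2 \geq F_{m+2}$ hold, which contradicts the hypothesis that at least one of these inequalities fails.

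The only auxiliary fact needed is that the Fibonacci sequence $(F_k)$ is monotonically non-decreasing in the relevant range. This follows immediately from the definition, since $F_1=1$, $F_2=2$, and the recurrence $F_{m+2}=F_{m+1}+F_m$ with $F_m\geq 1$ guarantees $F_{m+2}\geq F_{m+1}$ for all $m\geq 1$. Consequently, if $\delta(\A)\geq m$, then $F_{\delta(\A)+1}\geq F_{m+1}$ and $F_{\delta(\A)+2}\geq F_{m+2}$.

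Combining this monotonicity with Theorem \ref{t. bound}, I would write the short chain
\begin{equation*}
a_1 \geq F_{\delta(\A)+1} \geq F_{m+1}, \qquad a_2 \geq F_{\delta(\A)+2} \geq F_{m+2},
\end{equation*}
which contradicts the assumption that $a_1<F_{m+1}$ or $a_2<F_{m+2}$. Hence $\delta(\A)<m$, as claimed.

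There is essentially no obstacle here: the corollary is a packaging of Theorem \ref{t. bound} in contrapositive form. The only mild care needed is invoking monotonicity of $(F_k)$, which is why the conclusion is stated as a disjunction on $a_1$ and $a_2$ rather than a conjunction — either violated Fibonacci bound alone is enough to rule out $\delta(\A)\geq m$.
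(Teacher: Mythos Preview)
Your proof is correct and essentially identical to the paper's own argument: both assume $\delta(\A)\geq m$, apply Theorem~\ref{t. bound}, and then use monotonicity of the Fibonacci sequence to conclude $a_1\geq F_{m+1}$ and $a_2\geq F_{m+2}$, contradicting the hypothesis. The only difference is that you make the monotonicity step explicit, whereas the paper leaves it implicit.
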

\begin{proof}
Suppose that $\delta(\A)\geq m$. By the previous theorem $a_1\geq F_{\delta(\A)+1}$ and $a_2\geq F_{\delta(\A)+2}$. 
On the other hand, $F_{\delta(\A)+1}\geq F_{m+1}$ and $F_{\delta(\A)+2}\geq F_{m+2}$ implying $a_1\geq F_{m+1}$ 
and $a_2 \geq F_{m+2}$. 
\end{proof}

Eventhough the previous corollary gives a good bound for $\dA$, in practice it may not be easy to find the smallest $m$ such that 
$a_1<F_{m+1}$ or $a_2<F_{m+2}$ (especially for large numbers). Now we give a simpler bound using the number of digits in
$a_1$ and $a_2$ (written in base 10). We use the following well-known fact  (see, for instance, \cite{Gr}).

\begin{lem}\label{l. digits}
Let $k \geq 1$. If $m\geq5k$ then $F_{m+1}$ has at least $k+1$ digits.
\end{lem}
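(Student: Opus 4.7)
The plan is to translate ``$F_{m+1}$ has at least $k+1$ digits'' into the cleaner inequality $F_{m+1}\geq 10^{k}$ and then deduce it from a sharp exponential lower bound on the Fibonacci sequence. The essential ingredient will be the classical estimate $F_{m}\geq \phi^{m-1}$, valid for the paper's convention $F_{1}=1,\,F_{2}=2$, where $\phi=(1+\sqrt{5})/2$ is the golden ratio, combined with the numerical fact $\phi^{5}>10$.

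First I would establish $F_{m}\geq \phi^{m-1}$ for all $m\geq 1$ by induction on $m$. The base cases $m=1$ and $m=2$ are immediate, since $\phi^{0}=1$ and $\phi<2$. The induction step is a one-line computation using the defining identity $\phi^{2}=\phi+1$:
\[
F_{m+1}=F_{m}+F_{m-1}\geq \phi^{m-1}+\phi^{m-2}=\phi^{m-2}(\phi+1)=\phi^{m-2}\cdot\phi^{2}=\phi^{m}.
\]

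Second, I would record the numerical inequality $\phi^{5}>10$. Iterating $\phi^{2}=\phi+1$ yields $\phi^{3}=2\phi+1$, $\phi^{4}=3\phi+2$, and $\phi^{5}=5\phi+3$, so using $\phi>1.6$ gives $\phi^{5}>5(1.6)+3=11>10$.

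Finally, combining the two steps, for $m\geq 5k$ we obtain
\[
F_{m+1}\geq \phi^{m}\geq \phi^{5k}=(\phi^{5})^{k}>10^{k},
\]
so $F_{m+1}$ has at least $k+1$ digits, as claimed. I do not foresee any real obstacle; the only subtle point is matching the paper's indexing convention, but since the induction proceeds directly from the recurrence and the correct base cases, no adjustment is needed.
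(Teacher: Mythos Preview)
Your argument is correct: the induction $F_{m}\geq\phi^{m-1}$ holds for the paper's convention $F_{1}=1$, $F_{2}=2$, the identity $\phi^{5}=5\phi+3>10$ is verified as you indicate, and the chain $F_{m+1}\geq\phi^{m}\geq\phi^{5k}>10^{k}$ then gives the conclusion.

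There is nothing to compare against, however, because the paper does not prove this lemma. It is stated as a well-known fact with a reference to \cite{Gr} and no argument is supplied. Your proof is exactly the standard one that lies behind such citations, so it fits seamlessly; just be aware that in the paper this lemma is quoted rather than proved.
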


\begin{coro}
If $a_1$ and $a_2$ have $k_1$ and $k_2$ digits, respectively, then $\dA<\min\{5k_1,5k_2-1\}$.
\end{coro}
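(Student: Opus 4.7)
The plan is to deduce both halves of the bound independently from Theorem \ref{t. bound} combined with Lemma \ref{l. digits}, arguing by contradiction in each case. Both bounds will follow from the same template: assume the bound fails, use Theorem \ref{t. bound} to force $a_1$ or $a_2$ to exceed a specific Fibonacci number, and then invoke Lemma \ref{l. digits} to conclude that $a_1$ or $a_2$ must have too many digits.

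For the bound $\dA < 5k_1$, I would suppose toward a contradiction that $\dA \geq 5k_1$. Theorem \ref{t. bound} gives $a_1 \geq F_{\dA+1}$, and monotonicity of the Fibonacci sequence yields $a_1 \geq F_{5k_1+1}$. Applying Lemma \ref{l. digits} with $m = 5k_1$ and $k = k_1$ (the hypothesis $m \geq 5k$ being immediate) shows that $F_{5k_1+1}$ has at least $k_1+1$ digits, so $a_1$ has at least $k_1+1$ digits, contradicting the hypothesis.

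For the bound $\dA < 5k_2 - 1$, the approach is analogous but with a small index shift. I would suppose $\dA \geq 5k_2 - 1$, so that $\dA + 1 \geq 5k_2$. Applying Lemma \ref{l. digits} with $m = \dA + 1$ and $k = k_2$ then shows that $F_{\dA+2}$ has at least $k_2+1$ digits. Combined with $a_2 \geq F_{\dA+2}$ from Theorem \ref{t. bound}, this contradicts the assumption that $a_2$ has exactly $k_2$ digits. Putting the two inequalities together yields $\dA < \min\{5k_1, 5k_2 - 1\}$.

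No significant obstacle is expected; the only delicate point is index bookkeeping. The asymmetry $5k_1$ versus $5k_2 - 1$ in the final bound is a direct reflection of the asymmetry $a_i \geq F_{\dA+i}$ in Theorem \ref{t. bound}: the digit-count obstruction triggered by $a_2$ kicks in one Fibonacci index earlier than the one triggered by $a_1$, which produces the $-1$ in the second term of the minimum.
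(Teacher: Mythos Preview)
Your proof is correct and follows essentially the same approach as the paper: both argue by contradiction on each half of the minimum separately, applying Theorem~\ref{t. bound} to bound $a_i$ below by a Fibonacci number and then Lemma~\ref{l. digits} to force too many digits. The only cosmetic difference is that in the first half you pass through $F_{5k_1+1}$ via monotonicity before invoking the lemma, whereas the paper applies the lemma directly at $m=\dA$; this changes nothing substantive.
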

\begin{proof}
Let $m:= \dA$ and assume that $m\geq 5k_1$. By theorem \ref{t. bound}, $a_1\geq F_{m+1}$ and by lemma \ref{l. digits}, $F_{m+1}$ 
has at least $k_1+1$ digits. Consequently, $a_1$ has at least $k_1 + 1$ digits. This shows that if $a_1$ has $k_1$ digits then $\dA<5k_1$.
Similarly, if we assume that $m\geq 5k_2-1$ then $m+1\geq 5k_2$. Theorem \ref{t. bound} and lemma \ref{l. digits} imply that $a_2$ has 
at least $k_2 + 1$ digits. Therefore, if $a_2$ has $k_2$ digits then $\dA<5k_2-1$.
\end{proof}



\section{Some other features}\label{s. features}

In this last section we collect some other features regarding the Nash modification of toric curves. We will see that there are some 
nice properties of the Nash modification of toric varieties that hold only in dimension 1.

\subsection{Hilbert-Samuel multiplicity}

It is well known that Hilbert-Samuel multiplicity plays a fundamental role in theorems of resolution of singularities. This invariant 
has been used to measure improvements on singularities after a suitable blowup. What about for the Nash modification?

The proof we gave for the resolution of singularities of toric curves iterating Nash modification was a purely combinatorial one and
the improvements in the algorithm were measured by looking at the semigroup itself. Because of the following result (which seems 
to be well known, see for instance \cite{Sh}) the improvements on the singularities of toric curves after Nash modification can also
be measured using Hilbert-Samuel multiplicity.

\begin{pro}
Let $\A=\{a_1,\ldots,a_n\}\subset\Z$ be a finite set satisfying (\ref{cond}). Let $R$ be the localization of $\C[\YA]$ at the maximal 
ideal corresponding to $0\in\YA$. Then $mult(\YA,0)=a_1$, where $mult(\YA,0)$ is the Hilbert-Samuel multiplicity of $R$.
\end{pro}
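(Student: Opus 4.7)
The plan is to pick the parameter $x=t^{a_1}\in\mathfrak m$ and exploit that $R$, the localization of the one-dimensional affine domain $\C[\YA]$, is a one-dimensional Noetherian local Cohen--Macaulay ring. For such a ring, if the principal ideal $(x)$ is a reduction of $\mathfrak m$ then
\[
mult(\YA,0)=e(\mathfrak m,R)=e\bigl((x),R\bigr)=\ell_R\bigl(R/(x)\bigr)=\dim_\C R/(x).
\]
So it suffices to compute $\dim_\C R/(t^{a_1})$ and to verify the reduction property.

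First I would identify $R/(t^{a_1})$ combinatorially. Give $\C[\YA]\cong\C[\N\A]$ the positive $\Z$-grading $\deg(t^a)=a$; the irrelevant maximal ideal $\mathfrak n$ of this graded $\C$-algebra localizes to $\mathfrak m$, and since $\C[\N\A]/(t^{a_1})$ is already a local Artinian ring, the canonical map $\C[\N\A]/(t^{a_1})\to R/(t^{a_1})$ is an isomorphism of $\C$-vector spaces. A $\C$-basis of the left-hand side is indexed by the Apéry set
\[
\mathrm{Ap}(\N\A,a_1):=\{w\in\N\A : w-a_1\notin\N\A\}.
\]
Because $\gcd(\A)=1$, the semigroup $\N\A$ meets every residue class modulo $a_1$, and each such class contains a unique minimal element, so $|\mathrm{Ap}(\N\A,a_1)|=a_1$ and therefore $\dim_\C R/(t^{a_1})=a_1$.

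The main obstacle, I expect, is verifying that $(t^{a_1})$ is a reduction of $\mathfrak m$, i.e., $\mathfrak m^{k+1}=t^{a_1}\mathfrak m^k$ for all sufficiently large $k$. Letting $I_k\subseteq\N\A$ denote the set of elements expressible as a sum of at least $k$ members of $\A$, this translates to the combinatorial statement $I_{k+1}=a_1+I_k$ eventually. The inclusion $\supseteq$ is immediate. For $\subseteq$ I would prove by induction on $s\in\N\A$ that its maximum number of parts $\omega(s)$ equals $k_s+\omega(w)$, where $s=w+k_s a_1$ is the Apéry decomposition; the intuition is that to maximize parts one should extract as many copies of $a_1$ as possible, but one must carefully dismiss representations of $s$ that avoid the generator $a_1$ altogether. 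Once this lemma is in hand, any $s\in I_{k+1}$ with $k>\max_w\omega(w)$ must use at least one copy of $a_1$, yielding $s-a_1\in I_k$; combined with Cohen--Macaulayness of $R$, this gives $mult(\YA,0)=a_1$.
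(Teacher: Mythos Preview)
The paper does not give a proof of this proposition; it records it as well known with a reference to \cite{Sh}. Your strategy---show that $(t^{a_1})$ is a reduction of $\mathfrak m$, then use that the one-dimensional local domain $R$ is Cohen--Macaulay to get $e(\mathfrak m,R)=\ell_R\bigl(R/(t^{a_1})\bigr)=|\mathrm{Ap}(\N\A,a_1)|=a_1$---is the standard route and is correct in outline.

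The gap is in the reduction step. The lemma you propose, $\omega(s)=k_s+\omega(w)$, is \emph{false} in general: for $\A=\{5,6,13\}$ and $s=18$ one has $w=13$, $k_s=1$, $\omega(13)=1$, so $k_s+\omega(w)=2$, while $18=6+6+6$ gives $\omega(18)=3$. The consequence you draw from it also fails: with $k=3>\max_{w}\omega(w)=2$ we have $24=6+6+6+6\in I_4$, but $\omega(19)=2$, so $24-5\notin I_3$ and $I_4\not\subseteq a_1+I_3$. (This is precisely the phenomenon that the tangent cone $\mathrm{gr}_{\mathfrak m}(R)$ of a numerical semigroup ring need not be Cohen--Macaulay; the case you ``carefully dismiss'' is exactly where it breaks.) The fix is a much shorter argument for the reduction property: for each $i$ one has $(t^{a_i})^{a_1}=(t^{a_1})^{a_i}\in(t^{a_1})^{a_1}$ because $a_1(a_i-a_1)\in\N\A$, so every generator $t^{a_i}$ is integral over the ideal $(t^{a_1})$; hence $\mathfrak m\subseteq\overline{(t^{a_1})}$ and $(t^{a_1})$ is a reduction of $\mathfrak m$. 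Equivalently, in the normalization $V=\C[t]_{(t)}$ one has $\mathfrak m V=t^{a_1}V=(t^{a_1})V$, so the two ideals share the same integral closure in $R$. With this replacement, the rest of your argument goes through unchanged.
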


Now lemma \ref{l. one iteration} can be restated as follows.

\begin{coro}
Let $\YA$ be a singular toric curve defined by a set $\A\subset\Z$ satisfying (\ref{cond}). Then $mult(Y_{\A^1},0)\leq mult(\YA,0)$. 
In addition, the Hilbert-Samuel multiplicity drops after a finite number of iterations of the Nash modification. 
\end{coro}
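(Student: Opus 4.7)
The plan is to translate the combinatorial statements already proved into the language of multiplicity by means of the preceding proposition, so essentially no new work is required.

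First, I would apply the proposition twice: it yields $mult(\YA,0)=\min(\A)$ on the one hand, and $mult(Y_{\A^1},0)=\min(\A^1)$ on the other (after checking that $\A^1$ satisfies condition (\ref{cond}), which is built into the setup of algorithm \ref{algo}: the gcd is preserved under the replacement $a_i\mapsto a_i-a_1$, and the elements remain strictly positive). Lemma \ref{l. one iteration} then gives $\min(\A^1)\leq\min(\A)$, which is precisely $mult(Y_{\A^1},0)\leq mult(\YA,0)$.

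For the second assertion, I would iterate. Since $\YA$ is singular, $\min(\A)=a_1>1$, so the hypotheses of lemma \ref{l. one iteration} are satisfied and the lemma delivers some $k\in\N$ with $\min(\A^k)<\min(\A)$. Applying the proposition to $\YA$ and to $Y_{\A^k}$ (again $\A^k$ still satisfies (\ref{cond}); if $1\in\A^k$, then $mult(Y_{\A^k},0)=1<\min(\A)$ and we are done) converts this strict inequality into $mult(Y_{\A^k},0)<mult(\YA,0)$, i.e., after finitely many iterations of Nash modification the Hilbert--Samuel multiplicity has strictly dropped.

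There is no real obstacle: the whole point is that the proposition identifies the semigroup-theoretic quantity $\min(\A)$ that was tracked in lemma \ref{l. one iteration} with the Hilbert--Samuel multiplicity at the origin, so the corollary is literally a restatement. The only small care needed is to observe that the conditions $0<a_1^{(i)}<\cdots$ and $\gcd(\A^i)=1$ persist along the algorithm, so that the proposition is applicable at every stage.
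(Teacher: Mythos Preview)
Your proposal is correct and matches the paper's approach exactly: the paper presents this corollary with no proof, introducing it simply as a restatement of Lemma~\ref{l. one iteration} via the proposition identifying $mult(\YA,0)$ with $\min(\A)$. Your write-up just makes explicit the translation (and the routine check that $\A^1$ still satisfies (\ref{cond})) that the paper leaves implicit.
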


Unfortunately, for higher-dimensional toric varieties, the previous corollary is false in general.

\begin{exam}
Let $a_1=(1,0)$, $a_2=(1,1)$, and $a_3=(3,4)$. The set $\A=\{a_1,a_2,a_3\}$ defines the normal toric surface 
$\YA=\V(xz-y^4)\subset\C^3$. Since $\A\subset\Z^2$ satisfies $\Z\A=\Z^2$ and $(0,0)\notin\Co(\A)$, we can apply 
algorithm \ref{algo gral} to compute the Nash modification of $\YA$.

Applying step 2 for $J_0=(1,2)\in\{1,2,3\}^2$, we obtain $\A_{J_0}=\{a_1,a_2\}\cup\{(2,3),(2,4)\}$. 
The toric ideal asociated to $\A_{J_0}$ is $I_{\A_{J_0}}=\langle y^2w-z^2,xw-yz,xz-y^3 \rangle.$ A direct computation
shows that $(0,0,0,0)\in Y_{\A_{J_0}}\subset\C^4$ is a singular point. Now notice that $mult(\YA,(0,0,0))=2$. 
Using $\mathtt{SINGULAR}$  $\mathtt{4}$-$\mathtt{0}$-$\mathtt{2}$ (\cite{DGPS}), we computed the Hilbert-Samuel 
multiplicity of $Y_{\A_{J_0}}$ to find $mult(Y_{\A_{J_0}},(0,0,0,0))=3$. Thus, there is a point in the preimage of Nash 
modification of $(0,0,0)\in\YA$ whose Hilbert-Samuel multiplicity increases.
\end{exam}

\subsection{Embedding dimension}

It is well known that Nash modification of an algebraic variety may not preserve embedding dimension. A. Nobile
illustrated this fact with a plane curve $X\subset\C^2$ given by the parametrization $x=t^4$, $y=t^{11}+t^{13}$. 
Nobile proved that at the only point of $\nu^{-1}((0,0))$, its embedding dimension is 3 (see \cite[Example 3]{No}). 
In the following proposition we see that for toric curves the embedding dimension do not increase after applying Nash 
modification.

\begin{pro}
Let $\A=\{a_1,\ldots,a_n\}\subset\Z$ be a finite set satisfying (\ref{cond}). Let $\A'\subset\Z$ be the set obtained from
algorithm \ref{algo}. Then the embedding dimension of $Y_{\A'}$ is less or equal than the embedding dimension of $\YA$.
\end{pro}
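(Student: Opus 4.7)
The plan is to translate the statement into the language of the defining semigroups and then produce an injection between their minimal generating sets. For a finite set $\B \subset \N$ satisfying (\ref{cond}), the embedding dimension of the toric curve $Y_{\B}$ at $0$ equals the cardinality $|G(\B)|$ of the minimal generating set of the numerical semigroup $\N\B$ (equivalently, the number of $b \in \N\B \setminus \{0\}$ not expressible as a sum of two nonzero elements of $\N\B$). By the torus action this is also the global embedding dimension, so the proposition reduces to showing $|G(\A')| \leq |G(\A)|$.

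To build the required inequality I will first reduce, via Remark \ref{r. redundant}, to the case where $\A$ contains no multiples of $a_1 = \mA$ other than $a_1$ itself. Under this hypothesis the assignment
\begin{equation*}
f \colon \A' \longrightarrow \A, \qquad f(a_1) = a_1, \quad f(a_i - a_1) = a_i \text{ for } i \geq 2,
\end{equation*}
is a well-defined bijection of sets. The key claim is that $f$ restricts to a map $G(\A') \to G(\A)$; since $f$ is injective, this will immediately give $|G(\A')| \leq |G(\A)|$.

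I will establish the claim by contrapositive: if $f(g) \notin G(\A)$, then $g \notin G(\A')$. The case $g = a_1$ is vacuous because $\mA$ is always a minimal generator of its own semigroup. Otherwise $g = a_i - a_1$; assuming $a_i = \sum_j c_j a_j$ is a nontrivial decomposition in $\N\A$ (necessarily with $c_j = 0$ for $j \geq i$, since the $a_j$ are positive and increasing), I substitute $a_j = a_1 + (a_j - a_1)$ for each $j \geq 2$ to obtain
\begin{equation*}
a_i - a_1 = \Bigl(c_1 - 1 + \sum_{j \geq 2} c_j\Bigr) a_1 + \sum_{j \geq 2} c_j \,(a_j - a_1),
\end{equation*}
a formal combination of elements of $\A'$. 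The main point, and the only nontrivial step, is to check that this is a genuine decomposition in $\N\A'$ (all coefficients nonnegative) with at least two summands. Both conditions hinge on the exclusion of multiples of $a_1$ from $\A$: if that exclusion failed, $a_i = 2a_1$ would reduce the right-hand side to the trivial identity $a_i - a_1 = a_1$. Once the exclusion is in place one has $\sum_{j \geq 2} c_j \geq 1$, and a small case split on whether $c_1 \geq 1$ or $c_1 = 0$ (the latter forcing $\sum_{j \geq 2} c_j \geq 2$) handles both checks simultaneously, thereby contradicting $g \in G(\A')$ and finishing the proof.
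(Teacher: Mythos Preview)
Your argument is correct. The reduction via Remark \ref{r. redundant} is legitimate (since $Y_{\A'}$ is the Nash modification of $Y_{\A}$, it depends only on $\N\A$, not on the particular presentation), the bijection $f$ is well defined once multiples of $a_1$ are removed, and your case analysis showing that a nontrivial decomposition of $a_i$ in $\N\A$ yields a nontrivial decomposition of $a_i-a_1$ in $\N\A'$ goes through exactly as you outlined: the key point $\sum_{j\geq2}c_j\geq1$ follows from the no-multiples hypothesis, and then both the nonnegativity of $c_1-1+\sum_{j\geq2}c_j$ and the inequality $c_1-1+2\sum_{j\geq2}c_j\geq2$ follow from $\sum_j c_j\geq2$ in each of the two subcases.

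That said, the paper's own proof is far shorter and uses a different, more elementary idea. It simply assumes without loss of generality that $\A$ is already the minimal generating set of $\N\A$ (so the embedding dimension of $\YA$ equals $n=|\A|$), then observes from the description of the algorithm that $|\A'|\leq n$. Since any generating set of $\N\A'$ has cardinality at least $|G(\A')|$, this immediately gives $|G(\A')|\leq|\A'|\leq n=|G(\A)|$. No explicit injection between minimal generating sets is needed: one only compares the size of $\A'$ to that of $\A$. Your approach proves a slightly stronger structural statement (the minimal generators of $\N\A'$ arise, via $f$, from minimal generators of $\N\A$), which is interesting in its own right, but for the proposition as stated it is more work than necessary.
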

\begin{proof}
Suppose that the semigroup $\N\A$ is minimally generated by $\A$. Then $\YA$ has embedding dimension $n$. According 
to algorithm \ref{algo}, $|\A^1|\leq n$. In particular, the embedding dimension of $\YA^*=Y_{\A^1}$ is less or equal than $n$.
\end{proof}

The previous proposition is not true for general toric varieties as the following example shows.

\begin{exam}
Let $\A=\{(1,0),(1,1),(2,3)\}\subset\Z^2$. Let $\YA=\V(xz-y^3)\subset\C^3$ be the corresponding toric surface. The 
embedding dimension of $\YA$ is 3. Applying the combinatorial algorithm \ref{algo gral} to $\A$ we obtain the following two sets in $\Z^2$, 
which determine the affine charts covering $\YA^{*}$: $\A_1=\{(1,0),(1,1),(1,2),(1,3)\}$ and $\A_2=\{(-1,-2),(0,-1),(1,1),(2,3)\}$.
A direct computation shows that the semigroups $\N\A_1$ and $\N\A_2$ are minimally generated by $\A_1$ and $\A_2$, respectively.
Thus, the embedding dimension of $Y_{\A_1}\subset\C^4$ or $Y_{\A_2}\subset\C^4$ at the origin is 4. We conclude that there are
points in $\YA^{*}$ on which the embedding dimension increases.
\end{exam}

\subsection{Zero locus of an ideal defining the Nash modification}

Let $\A=\{a_1,\ldots,a_n\}\subset\Z^d$ be a set of vectors defining a toric variety $\YA\subset\C^n$.
It was proved in \cite{GS-1,LJ-R,GT} that an ideal whose blowup defines the Nash modification of a toric variety
can be described in the following combinatorial way (this ideal is usually called \textit{logarithmic jacobian ideal}).
\begin{teo}\label{t. ideal toric Nash}
Let $\J$ be the ideal of the coordinate ring $\C[u_1,\ldots,u_n]/\IA$ of $\YA$ generated by the products 
$u_{i_1}\cdots u_{i_d}$ such that $\det(a_{i_1}\cdots a_{i_d})\neq0$. Then the Nash modification of $\YA$ is 
isomorphic to the blowup of $\YA$ along the ideal $\J$.
\end{teo}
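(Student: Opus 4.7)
The plan is to compute the Gauss map of $\YA$ explicitly on its dense torus, then identify the closure of its graph with the blowup of $\YA$ along $\J$. Denote by $T\cong(\C^*)^d$ the dense open torus in $\YA$, parametrized by the monomial map $\phi:(\C^*)^d\to\YA$ sending $t=(t_1,\ldots,t_d)$ to $(t^{a_1},\ldots,t^{a_n})$, where $t^{a_i}:=t_1^{a_{i,1}}\cdots t_d^{a_{i,d}}$. Since $T$ is smooth and Zariski dense in $\YA$, the Nash modification $\YA^{\ast}$ coincides with the Zariski closure in $\YA\times G(d,n)$ of the graph of $G|_{T}$, so it suffices to describe $G\circ\phi$.

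First, I would compose $G$ with the Plücker embedding $G(d,n)\hookrightarrow\mathbb{P}^{\binom{n}{d}-1}$ and read off the tangent spaces along $T$. The $(i,j)$-entry of the Jacobian of $\phi$ at $t$ is $a_{i,j}\,t^{a_i}/t_j$, and its columns span $T_{\phi(t)}\YA$. Factoring $t^{a_{i_k}}$ from row $k$ and $1/t_j$ from column $j$ shows that, for each multi-index $I=(i_1<\cdots<i_d)\in[n]^d$, the corresponding $d\times d$ minor equals
\[
\det(a_{i_1}\cdots a_{i_d})\cdot\frac{t^{a_{i_1}+\cdots+a_{i_d}}}{t_1\cdots t_d}.
\]
Since $u_{i_1}\cdots u_{i_d}\in\C[u_1,\ldots,u_n]/\IA$ pulls back via $\phi$ to $t^{a_{i_1}+\cdots+a_{i_d}}$, after clearing the common denominator $t_1\cdots t_d$ (legitimate in projective coordinates) and discarding the nonzero scalar factors $\det(a_I)$, the Plücker image of $G\circ\phi$ is
\[
\bigl[\,u_{i_1}\cdots u_{i_d}(\phi(t))\,\bigr]_{\det(a_I)\neq 0}.
\]
This is exactly the restriction to $T$ of the rational map $\YA\dashrightarrow\mathbb{P}^{N}$ (with $N=\binom{n}{d}-1$) determined by the chosen generators of $\J$.

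Next I would invoke the standard realization of a blowup as the closure of the graph of such a rational map: $\mathrm{Bl}_{\J}\,\YA$ is by construction the closure in $\YA\times\mathbb{P}^{N}$ of the graph of $x\mapsto[u_I(x)]_{\det(a_I)\neq 0}$ taken over $\YA\setminus V(\J)$. Because $T\subset\YA\setminus V(\J)$ is dense in $\YA$, this closure equals the closure of the graph over $T$, and the identification is compatible with the projections to $\YA$. Combining with the previous step gives an isomorphism $\YA^{\ast}\cong\mathrm{Bl}_{\J}\,\YA$ over $\YA$, proving the theorem.

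The only delicate point is checking that restricting to $T$ rather than to the full smooth locus $\YA\setminus\Si(\YA)$ is harmless on both sides; this follows at once from the irreducibility of $\YA$ and the fact that both morphisms are regular on open neighbourhoods of $T$ and agree on the dense subset $T$. I do not expect a serious obstacle beyond this bookkeeping: the heart of the argument is the explicit minor computation which identifies Plücker coordinates of tangent spaces with the monomials $u_I$ indexed by the $d$-tuples with $\det(a_I)\neq 0$.
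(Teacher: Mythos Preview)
The paper does not give its own proof of this theorem; it is quoted as a known result from \cite{GS-1,LJ-R,GT}. Your argument is essentially the standard one found in those references: compute the Jacobian of the monomial parametrization of the torus, read off the Pl\"ucker coordinates of the tangent spaces as its $d\times d$ minors, and recognise these (up to a common factor and a projection killing the identically zero coordinates) as the monomial generators of the logarithmic jacobian ideal.

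One step, however, is phrased too loosely. You say that after clearing the common denominator $t_1\cdots t_d$ you may ``discard the nonzero scalar factors $\det(a_I)$''. These constants depend on $I$, so they are \emph{not} a common scalar and cannot simply be cancelled in projective coordinates. What actually makes the argument work is that the two rational maps
\[
x\longmapsto\bigl[\det(a_I)\,u_{i_1}\cdots u_{i_d}(x)\bigr]_{\det(a_I)\neq 0}
\quad\text{and}\quad
x\longmapsto\bigl[u_{i_1}\cdots u_{i_d}(x)\bigr]_{\det(a_I)\neq 0}
\]
differ by the diagonal linear automorphism of $\mathbb{P}^N$ that scales the $I$-th coordinate by the nonzero constant $\det(a_I)$; hence the closures of their graphs over $\YA$ are isomorphic. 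Equivalently, the blowup along $\J$ depends only on the ideal, not on a particular choice of generators, so rescaling each generator by a nonzero constant is harmless. With this clarification your proof is complete.
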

Notice that in the case of curves, where $\A\subset\Z$ satisfies (\ref{cond}), 
$\J=\langle u_1,\ldots,u_n \rangle\subset\C[u_1,\ldots,u_n]/\IA$. 
\begin{coro}
The Nash modification of a singular toric curve $\YA$ is isomorphic to blowing up an ideal whose zero locus is the origin in $\YA$.
\end{coro}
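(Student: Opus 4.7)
The plan is to specialize Theorem \ref{t. ideal toric Nash} to the one-dimensional case and then read off the zero locus of the resulting ideal. Since the theorem already provides the Nash modification of $\YA$ as the blowup of the logarithmic jacobian ideal $\J$, the only work left is to determine $\J$ explicitly when $d=1$ and check that its vanishing locus on $\YA$ reduces to the origin.

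First I would unfold the definition of $\J$ in the curve setting. For $d=1$, the relevant determinants $\det(a_{i_1}\cdots a_{i_d})$ are just the individual integers $a_i$. By hypothesis (\ref{cond}) we have $0 < a_1 < \ldots < a_n$, so every $a_i$ is nonzero, and hence every single generator $u_i$ qualifies. This gives exactly the identification $\J = \langle u_1, \ldots, u_n \rangle \subset \C[u_1,\ldots,u_n]/\IA$ already recorded in the remark preceding the statement.

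Next I would analyze the zero locus of $\J$ inside $\YA$. A point $(x_1,\ldots,x_n)\in\YA\subset\C^n$ vanishes on every generator of $\J$ if and only if $x_1=\ldots=x_n=0$. Since $\IA$ is generated by binomials $x^\alpha-x^\beta$, each of which vanishes at the origin, the point $(0,\ldots,0)$ lies in $\YA$, so $\V(\J)\cap\YA=\{0\}$. Combining this with Theorem \ref{t. ideal toric Nash} yields the corollary.

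There is no serious obstacle to overcome, and in fact nothing beyond what is already recorded in the remark preceding the corollary needs to be verified. The content of the statement is really the observation that, while in higher dimension the presence of zero minors can force $\J$ to cut out strictly smaller subschemes, in the curve case the nonvanishing of every $a_i$ guarantees that $\J$ is the maximal ideal at the origin of $\YA$.
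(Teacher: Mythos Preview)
Your proposal is correct and is exactly the argument the paper has in mind: the corollary is stated without proof because it is immediate from the sentence preceding it, namely that for curves $\J=\langle u_1,\ldots,u_n\rangle$, together with Theorem~\ref{t. ideal toric Nash}. Your write-up simply makes explicit the two obvious points the paper leaves implicit (that every $a_i\neq 0$ so all $u_i$ appear, and that the common zero of the $u_i$ on $\YA$ is the origin).
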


In other words, the zero locus of the ideal $\J$, whose blowup defines the Nash modification of the toric curve $\YA$,
satisfies $\Si(\YA)=\V(\J)$. For toric surfaces this is not necessarily true. 

\begin{exam}
Let $\A=\{(1,0),(1,1),(1,2)\}\subset\Z^2$. Let $\YA=\V(xz-y^2)\subset\C^3$ be the corresponding toric surface.
In this case $\J=\langle xy,xz,yz \rangle$. Then $\Si(\YA)=\{(0,0,0)\}\subsetneq \{(x,0,z)\in\YA|x=0 \mbox{ or }z=0\}=\V(\J).$
\end{exam}

Actually, it can be proved that, under some hypothesis, for toric surfaces $\YA$ with isolated singular point, there is no choice 
of an ideal $\J$ defining the Nash modification of $\YA$ such that $\Si(\YA)=\V(\J)$ (see \cite[Theorem 3.6]{ChD}).

\subsection{Possible generalizations for toric surfaces}

The results presented in this note were motivated by the same questions for toric surfaces: does Nash modification
resolves singularities of toric surfaces? and, if so, how many iterations does it take? 

In \cite{D} both questions were explored and some partial answers were given (in \cite{GT} the first question was also partially 
answered for toric varieties of any dimension). Unfortunately, the bounds given in \cite{D} are not at all accurate. On the other hand, 
the cases considered in that paper were studied by separating algorithm \ref{algo gral} into two independent algorithms: one algorithm 
in $\{0\}\times\Z$ followed from an algorithm in $\Z\times\{0\}$. Both of these two algorithms are quite similar to the one we studied in 
this note. 

Because of this resemblance, we strongly believe that the approach of counting division algorithms that we developed 
in section \ref{s. counting} can be applied in dimension 2, at least in the cases considered in \cite{D}. This way of counting 
iterations might result in a better understanding of those cases for which we still do not know whether Nash modification solves
singularities.

\end{document}